\documentclass[12pt,reqno]{amsart}
\pdfoutput=1 
\usepackage{amsmath,bbm}
\usepackage{latexsym}
\usepackage{amsfonts}
\usepackage{amssymb}
\usepackage{color}
\usepackage[table]{xcolor}

\usepackage{graphicx}
\usepackage{url}
\usepackage{enumerate}
\usepackage{tikz}
\usetikzlibrary{shadings, intersections, calc, plotmarks}
\usepackage{marginnote}
\usepackage{stackrel}
\usepackage{empheq}
\usepackage{tcolorbox}
\usetikzlibrary{arrows.meta,positioning}

\usepackage{geometry}
\usepackage{hyperref}
\hypersetup{
  colorlinks=true,
  linkcolor=blue!90!black,    
  citecolor=green!50!black,   
  urlcolor=blue!70!black,     
  linktoc=all,                
}
\usepackage{cleveref}

\xdefinecolor{tumblue}     {RGB}{0,101,189}
\xdefinecolor{tumgreen}    {RGB}{162,173,  0}
\xdefinecolor{tumred}      {RGB}{229, 52, 24}
\xdefinecolor{tumivory}    {RGB}{218,215,203}
\xdefinecolor{tumorange}   {RGB}{227,114, 34}
\xdefinecolor{tumlightblue}{RGB}{152,198,234}

\newtheorem*{proposition*}{Proposition}

\newtheorem{theorem}{Theorem}
\newtheorem*{theorem*}{Theorem}
\newtheorem{lemma}{Lemma}
\newtheorem{corollary}{Corollary}
\newtheorem*{corollary*}{Corollary}

\newtheorem{example}{Example}

\newcommand{\Sym}{\operatorname{Sym}}

\newcommand{\R}{\mathbbm{R}}

\newcommand{\C}{\mathbbm{C}}

\newcommand{\cL}{\mathcal{L}}

\newcommand{\cU}{\mathcal{U}}

\newcommand{\1}{\mathbbm{1}}

\def\>{{\rangle}}
\def\<{{\langle}}

\def\per{{\rm per}\,}
\newcommand{\be}{\begin{equation}}
\newcommand{\ee}{\end{equation}}
\newcommand{\bea}{\begin{eqnarray}}
\newcommand{\eea}{\end{eqnarray}}

\newcommand{\tr}[1]{\mathrm{tr}\left[#1\right]} 

\newcommand{\rank}[1]{\mathrm{rank}\left[#1\right]}

\begin{document}

\title{Poset-refined majorization relations}

\author[Guterman]{Alexander E. Guterman}
\email{alexander.guterman@biu.ac.il}
\address{Bar-Ilan University, Ramat-Gan, Israel}

\author[Wolf]{Michael M. Wolf$^{1,2}$}
\email{m.wolf@tum.de}
\address{$^1$ Department of Mathematics, Technical University of Munich}
\address{$^2$ Munich Center for Quantum
Science and Technology (MCQST),  Munich, Germany}

\date{\today}

\begin{abstract} Several classical majorization relations for sums or products of matrices involve a majorizing vector of perfectly aligned eigenvalues or singular values. By relaxing the order of alignment to a partial order, we show that the majorization can be strengthened, provided the change-of-basis matrices admit an LU-approximation
with respect to this partial order. In this way, we obtain refined versions of Ky Fan's majorization relations, Horn's log-majorization relation, and von Neumann's trace inequality. As an application, we give a short proof of the separable Ky Fan majorization relation for an arbitrary number of tensor factors and extend it to a sum of tensor products of arbitrary matrices. Further applications concern majorization relations for sums of (anti-)symmetric powers and for products of Kronecker sums.
\end{abstract}
\maketitle
\section{Introduction}\label{sec:intro}
We will study three closely related majorization relations: Ky Fan's majorization relation for eigenvalues of Hermitian matrices, Ky Fan's weak majorization relation for singular values, and Horn's $\log$-majorization relation for singular values \cite[Ch.9]{MOmajo}:
\begin{equation}\label{eq:intro}
    \begin{aligned}
    \lambda(A+B)&\prec \lambda(A)+\lambda(B),\\
    \sigma(A+B)&\prec_w\sigma(A)+\sigma(B),\\
\sigma(AB)&\prec_{\log}\sigma(A)*\sigma(B).
\end{aligned}
\end{equation}
On the right-hand side of each relation, the entries of the two vectors are combined in aligned order, both vectors being arranged decreasingly. Our aim is to sharpen these bounds by changing this alignment, replacing the linear order on the index set with a partial order. Such a refinement cannot hold unconditionally, but it will be shown to hold whenever the change-of-basis matrix between $A$ and $B$ admits an LU-approximation
with respect to the partial order under consideration.

This work was inspired by \cite{Alhejji2026}, where a majorization relation is established for the sum of two tensor products of positive matrices. The proof given there relies on an intricate linear-programming argument, which raises the question of whether a more general framework underlies the result and could yield it (and others like it) with less effort. We develop such a framework. In \Cref{cor:tensors} we use it to extend the result of \cite{Alhejji2026} to arbitrary (not necessarily positive) matrices and to any number of tensor factors.

\section{Preliminaries}\label{sec:prelim}

We begin by fixing some notation.
Let $P$ be a finite set of cardinality $|P|=d$. By ${P\choose k}$ we denote the set of $k$-element subsets of $P$.  A matrix $A\in\C^{P\times P}$ can be regarded as a map $P\times P\rightarrow\C, (x,y)\mapsto A_{x,y}$. That is, its rows and columns are indexed in $P$. The usual way of labeling the entries of a matrix is then recovered if we choose $P$ to be $[d]\coloneqq\{1,\ldots,d\}$.  For subsets $S,T\subseteq P$, $A_{S,T}$ will denote the $|S|\times|T|$ submatrix of $A$ whose rows are specified by $S$ and columns by $T$.

We will write $\sigma_1(A)\geq\cdots\geq\sigma_d(A)$ for the decreasingly ordered list of singular values, and if $A$ is Hermitian, $\lambda_1(A)\geq\cdots\geq\lambda_d(A)$ for the eigenvalues. For any  $k\in [d]$, we will write $\sigma_{[k]}(A)\coloneqq\sum_{i=1}^k\sigma_i(A)$ (the \emph{Ky Fan $k$-norm} of $A$) and, for a Hermitian matrix, $\lambda_{[k]}(A)$ will denote the sum of the $k$ largest eigenvalues. Similarly, for any function $f:P\rightarrow\R$ and subset $S\subseteq P$, we will use the shorthand $f(S):=\sum_{x\in S}f(x)$.  By $D_f\in\C^{P\times P}$ we will denote the diagonal matrix with diagonal entries $\big(f(x)\big)_{x\in P}$.
Moreover, we write $\1$ for the identity matrix, whose dimension is determined by the context.

Two vectors $x,y\in\R^d$ satisfy the \emph{weak majorization} relation $x\prec_w y$ if $\sum_{i=1}^k x_i^\downarrow\leq \sum_{i=1}^k y_i^\downarrow$ holds for all $k\in[d]$. Here, the down arrows denote a decreasingly ordered rearrangement of the entries. \emph{Majorization} $x\prec y$ requires that in addition equality holds for $k=d$.
For vectors with nonnegative entries, we write $x\prec_{\log}y$ for the $\log$-majorization relation, which demands that
$$
    \prod_{i=1}^k x_i^\downarrow\leq \prod_{i=1}^k y_i^\downarrow\quad\text{for all }k\in[d],
$$
with equality for $k=d$. Finally, we use $x*y$ for componentwise multiplication.

\subsection*{Eigenvalue bounds} We will need two lemmas that bound eigenvalues of a sum of two Hermitian matrices in terms of sums of the individual eigenvalues:
\begin{lemma}\label{lem:ApB}
    Let $(a_i)_{i=1}^d$ and $(b_j)_{j=1}^d$ be orthonormal eigenbases of Hermitian matrices $A,B\in\C^{d\times d}$ with $Aa_i=\lambda_i(A)a_i$ and $Bb_j=\lambda_j(B)b_j$. Then
    \begin{equation}\label{eq:lambdamax}
        \lambda_{\max}(A+B)\leq\max_{\langle a_i,b_j\rangle\neq 0}\big(\lambda_i(A)+\lambda_j(B)\big)\eqqcolon M
    \end{equation}
\end{lemma}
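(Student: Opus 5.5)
The plan is to prove the operator inequality $A\leq M\1-B$, which immediately gives $\lambda_{\max}(A+B)\leq M$. Write $f(i)\coloneqq\lambda_i(A)$ and $g(j)\coloneqq M-\lambda_j(B)$. By definition of $M$, the hypothesis translates into the pointwise compatibility condition
\[
\langle a_i,b_j\rangle\neq 0\;\Longrightarrow\; f(i)\leq g(j).
\]
In these terms, we must show $\sum_i f(i)\,a_ia_i^*\leq \sum_j g(j)\,b_jb_j^*$.

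The tool is a layer-cake decomposition of both operators into spectral projections. For $t\in\R$ let $P_t$ be the orthogonal projection onto $\mathrm{span}\{a_i: f(i)>t\}$ and $Q_t$ the projection onto $\mathrm{span}\{b_j: g(j)>t\}$. Fix a constant $c$ below all values of $f$ and $g$. Then
\[
A=c\1+\int_c^\infty P_t\,dt,\qquad M\1-B=c\1+\int_c^\infty Q_t\,dt.
\]
This follows because $\int_c^\infty \1[f(i)>t]\,dt=f(i)-c$ for each $i$, and similarly for $g$.

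It therefore suffices to show $P_t\leq Q_t$ for every $t$, i.e.\ $\mathrm{ran}\,P_t\subseteq\mathrm{ran}\,Q_t$. Take $i$ with $f(i)>t$ and expand $a_i=\sum_j\langle b_j,a_i\rangle b_j$ in the orthonormal basis $(b_j)$. Only indices $j$ with nonzero overlap appear, and each of these satisfies $g(j)\geq f(i)>t$ by the compatibility condition. Hence $a_i\in\mathrm{ran}\,Q_t$, which gives the inclusion of ranges. Integrating over $t$ then yields $A\leq M\1-B$, and therefore every eigenvalue of $A+B$ is at most $M$.

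I expect no serious obstacle. The one point needing care is the choice of the threshold decomposition with a strict inequality $>t$ and a common base constant $c$, so that both integral representations are exact and the nesting $P_t\leq Q_t$ holds for every $t$, not only for generic $t$.
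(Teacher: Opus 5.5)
Your proof is correct, and it takes a genuinely different route from the paper.

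The paper's argument is analytic. It writes $\lambda_{\max}(A+B)=\lim_{t\to\infty}\frac1t\log\tr{e^{t(A+B)}}$. It then bounds this trace via Golden--Thompson by $\tr{e^{tA}e^{tB}}=\sum_{i,j}|\langle a_i,b_j\rangle|^2e^{t(\lambda_i(A)+\lambda_j(B))}$ and reads off $M$ as the exponential growth rate, so the vanishing overlaps simply drop out of the sum.

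You instead prove the operator inequality $A+B\le M\1$ directly. You decompose $A$ and $M\1-B$ into nested spectral projections and observe that the overlap condition forces $\mathrm{ran}\,P_t\subseteq\mathrm{ran}\,Q_t$ at every level $t$.

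What your route buys:
\begin{itemize}
\item It is fully elementary: no Golden--Thompson and no limiting argument.
\item It proves a slightly more general principle: $\sum_i f(i)a_ia_i^*\le\sum_j g(j)b_jb_j^*$ holds whenever $\langle a_i,b_j\rangle\neq0$ implies $f(i)\le g(j)$.
\item It uses the same layer-cake device (the level sets $F_t$, $S_t$, $R_t$) that the paper itself uses in the proof of \Cref{thm:KFlambda}, so it fits the paper's toolkit naturally.
\end{itemize}

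What the paper's route buys: it is a one-line argument once Golden--Thompson is taken as a black box. It also has the same shape as the $s\to0$ limit argument in the proof of \Cref{thm:HornP}, where a multiplicative analogue of the support bound is needed.

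Your stated concern about strict versus non-strict thresholds is unnecessary. With $\ge t$ the range inclusion holds just as well, and in any case a failure at finitely many values of $t$ would not change the integrals.
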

\begin{proof}
    This follows from
    \begin{equation*}
\lambda_{\max}(A+B)=\lim_{t\rightarrow\infty}\frac1t \log\tr{e^{t(A+B)}}\leq\lim_{t\rightarrow\infty}\frac1t\log\tr{e^{tA}e^{tB}}=M.
    \end{equation*}
    Here, the middle step is the Golden-Thompson inequality, and the equalities on the left and right both use the elementary limit 
    $$\lim_{t\rightarrow\infty}\frac1t\log\sum_k c_k e^{t m_k}=\max\{m_k\mid c_k>0\}\quad\text{for}\quad c\in\R_{\geq 0}^d\setminus\{0\}.$$ The two equalities follow by evaluating the traces in the eigenbasis of $(A+B)$, $A$, and $B$, respectively.
\end{proof}

For a Hermitian matrix $A\in\C^{d\times d}$ and $k\in[d]$, we define an endomorphism $A^{[k]}$ on the $k$'th exterior power of $\C^d$ via
\begin{equation}
    A^{[k]}(v_1\wedge\cdots\wedge v_k)\coloneqq\sum_{i=1}^k v_1\wedge\cdots\wedge A v_i\wedge\cdots\wedge v_k.
\end{equation}
As a matrix,  this is called the $k$'th \emph{additive compound matrix} of $A$, and it satisfies $(A+B)^{[k]}=A^{[k]}+B^{[k]}$ and $\lambda_{\max}(A^{[k]})=\lambda_{[k]}(A)$ \cite{compound}.
\begin{lemma}\label{lem:detVST}
Let $P$ be a set of cardinality $d$, and let  $$A=\sum_{i\in P}\alpha(i)|a_i\rangle\langle a_i|,\qquad B=\sum_{j\in P}\beta(j)|b_j\rangle\langle b_j|$$ be spectral decompositions of two Hermitian matrices in orthonormal bases.
Set $C_{ij}\coloneqq\langle a_i,b_j\rangle$. Then for all $k\in[d]$:
    \begin{equation}
        \lambda_{[k]}(A+B)\leq \max_{S,T\in\binom{P}{k}}\big\{\alpha(S)+\beta(T)\mid \det C_{S,T}\neq 0\big\}.\label{eq:lambdakdet}
    \end{equation}
\end{lemma}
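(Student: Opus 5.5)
The plan is to reduce the statement to \Cref{lem:ApB}, applied on the $k$'th exterior power $\Lambda^k\C^d$ to the additive compound matrices. By the properties recalled above, $(A+B)^{[k]}=A^{[k]}+B^{[k]}$ and $\lambda_{\max}\big((A+B)^{[k]}\big)=\lambda_{[k]}(A+B)$. It therefore suffices to bound $\lambda_{\max}(A^{[k]}+B^{[k]})$ using \Cref{lem:ApB}, with suitable eigenbases of $A^{[k]}$ and $B^{[k]}$.

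\textbf{Step 1: eigenbases of the compounds.} For $S=\{i_1<\cdots<i_k\}\subseteq P$ (after fixing any linear order on $P$ so that the wedge products are well defined), set
\[
a_S\coloneqq a_{i_1}\wedge\cdots\wedge a_{i_k}, \qquad b_T\coloneqq b_{j_1}\wedge\cdots\wedge b_{j_k}.
\]
With the standard inner product on $\Lambda^k\C^d$, $\langle v_1\wedge\cdots\wedge v_k, w_1\wedge\cdots\wedge w_k\rangle=\det(\langle v_r,w_s\rangle)_{r,s}$. Hence $(a_S)_{S\in\binom{P}{k}}$ and $(b_T)_{T\in\binom{P}{k}}$ are orthonormal bases of $\Lambda^k\C^d$. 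From the definition of $A^{[k]}$,
\[
A^{[k]}a_S=\sum_{r}a_{i_1}\wedge\cdots\wedge\alpha(i_r)a_{i_r}\wedge\cdots\wedge a_{i_k}=\alpha(S)\,a_S,
\]
and similarly $B^{[k]}b_T=\beta(T)\,b_T$.

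\textbf{Step 2: overlaps.} By the Gram-determinant formula,
\[
\langle a_S,b_T\rangle=\det\big(\langle a_{i_r},b_{j_s}\rangle\big)_{r,s}=\det C_{S,T}.
\]
The sign is fixed by the chosen orderings and is irrelevant for whether the determinant vanishes.

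\textbf{Step 3: apply \Cref{lem:ApB}.} \Cref{lem:ApB} is stated with indices in $[d]$ and eigenvalues listed decreasingly. Its proof, however, never uses the ordering: it only uses that $(a_i)$ and $(b_j)$ are orthonormal eigenbases with $\tr{e^{tA}e^{tB}}=\sum_{i,j}e^{t(\alpha_i+\beta_j)}|\langle a_i,b_j\rangle|^2$. So it applies verbatim to the index set $\binom{P}{k}$ with eigenvalues $\alpha(S)$ and $\beta(T)$. This gives
\[
\lambda_{[k]}(A+B)=\lambda_{\max}(A^{[k]}+B^{[k]})\le\max_{S,T}\big\{\alpha(S)+\beta(T)\;\big|\;\det C_{S,T}\neq0\big\},
\]
which is \eqref{eq:lambdakdet}.

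The main point needing care is Step 3: confirming that \Cref{lem:ApB} does not depend on the labeling or ordering of the eigenvalues, and that the maximum is over a nonempty set. Nonemptiness holds because the unitary $C$ induces a unitary $k$'th compound, so some $\det C_{S,T}\neq0$. The remaining ingredients are standard multilinear algebra: the wedge-product inner product formula and the eigenvectors of $A^{[k]}$.
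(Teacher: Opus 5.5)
Your proposal is correct and follows the paper's proof: you pass to additive compounds on $\Lambda^k\C^d$, use the wedge products $a_S$ and $b_T$ as eigenbases with eigenvalues $\alpha(S)$ and $\beta(T)$, identify the overlaps as $\det C_{S,T}$, and apply \Cref{lem:ApB}. Your checks that \Cref{lem:ApB} does not depend on the ordering of the eigenvalues and that the maximum is taken over a nonempty set are not needed for the argument, but they are accurate.
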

\begin{proof}
    For $\lambda_{[k]}(A+B)=\lambda_{\max}(A^{[k]}+B^{[k]})$ we can apply \Cref{lem:ApB} and use that the vectors $\bigwedge_{i\in S}a_i$, with $S\in\binom{P}{k}$, form an eigenbasis of $A^{[k]}$ with corresponding eigenvalues $\alpha(S)$ (and similarly for $B^{[k]}$). The overlap between two eigenvectors is therefore $\big|\langle \bigwedge_{i\in S}a_i,\bigwedge_{j\in T}b_j\rangle\big|=\big|\det C_{S,T}\big|$, see \cite[Ex. I.5.1]{Bhatia}, so that Eq.(\ref{eq:lambdakdet}) follows from Eq.(\ref{eq:lambdamax}).
\end{proof}
\subsection*{Partially ordered sets }
A \emph{partially ordered set} (poset) is a set $P$ with a reflexive, antisymmetric, transitive relation for which we write ``$x\leq y$''.  A function $f:P\rightarrow\R$ is called \emph{order-decreasing} if $x\leq y\Rightarrow f(x)\geq f(y)$, where the usual order on $\R$ is used. A poset is called  \emph{linearly ordered} (or \emph{totally ordered}) if every two elements can be compared, i.e., if $x\leq y \vee y\leq x$.
For a Cartesian product $P_1\times\cdots\times P_n$ of posets, the \emph{product order} is defined componentwise by $$(x_1,\ldots,x_n)\leq(y_1,\ldots,y_n)\quad\Longleftrightarrow\quad \forall i: x_i\leq y_i.$$

\section{Refined majorization from poset-LU factorization}\label{sec:KFLU}

For a finite poset $P$, we define the sets of lower and upper triangular matrices 
\begin{equation}
    \begin{aligned}
 \cL(P)&:=\{L\in\C^{P\times P}\mid L_{xy}=0\text{ unless }y\leq x\},\\
 \cU(P)&:=\{U\in\C^{P\times P}\mid U_{xy}=0\text{ unless }x\leq y\}.
\end{aligned}                               \label{eq:poset-triangular}
\end{equation}


If for $C\in\C^{P\times P}$ there are sequences $L_n\in\cL(P)$, and $U_n\in\cU(P)$ s.t. $\lim_{n\rightarrow\infty}L_n U_n=C$ in the norm topology, then we say that $C$ \emph{admits an LU-approximation}.  If $P=[d]$ with the usual order, then all matrices admit an LU-approximation.
For general posets, however, not every matrix admits an LU-approximation\footnote{Consider for instance a poset where no two distinct elements are comparable. Then only the diagonal matrices are triangular.}. 
In fact, we are going to prove that \emph{every} matrix admits an LU-approximation if and only if the poset is a linearly ordered set:

\begin{lemma} \label{lem:linear_order}
    Let $P$ be a poset, $|P|=d$. Then the closure in the norm topology
    $$\overline{\cL(P)\cU(P)}=\C^{P\times P}$$
    if and only if $P$ is linearly ordered.
\end{lemma}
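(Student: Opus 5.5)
We prove the two directions separately.

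\emph{If $P$ is linearly ordered.} Relabel $P$ as $[d]$ via the unique order-preserving bijection. Then $\cL(P)$ and $\cU(P)$ are the usual lower and upper triangular matrices, so it suffices to show that the set of matrices admitting an LU-decomposition is dense in $\C^{d\times d}$. A matrix $C$ whose leading principal minors $\det C_{[k],[k]}$, $k\in[d]$, are all nonzero has an LU-factorization by Gaussian elimination without pivoting. Each such minor is a nonzero polynomial in the entries, so the set where at least one vanishes is a finite union of proper algebraic hypersurfaces. Its complement is therefore dense. Concretely, for any $C$ the matrix $C+\varepsilon\1$ has all leading minors nonzero for all but finitely many $\varepsilon$, so $C$ is a limit of matrices of the form $LU$.

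\emph{If $P$ is not linearly ordered.} Pick incomparable elements $x\neq y$. Since the closure of $\cL(P)\cU(P)$ is contained in the closure of any set containing it, it suffices to find a polynomial identity satisfied by every product $LU$ but violated by some matrix. I would look at the $2\times 2$ block on $\{x,y\}$. We have
$$(LU)_{xy}=\sum_{z} L_{xz}U_{zy},$$
and a term can be nonzero only if $z\leq x$ and $z\leq y$. Similarly $(LU)_{yx}$ only involves $z\leq x,y$.

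Vanishing alone fails, since such common lower bounds $z$ may exist. Instead, the natural candidate is the $2\times2$ minor
$$\det (LU)_{\{x,y\},\{x,y\}}.$$
By Cauchy–Binet it equals
$$\sum_{S}\det L_{\{x,y\},S}\,\det U_{S,\{x,y\}},$$
summed over two-element sets $S$. This does not obviously vanish, so a cleaner approach is needed.

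The cleaner route uses the down-set $D=\{z\mid z\leq x\}$. This set is an order ideal not containing $y$. Writing $LU=C$, the lower-triangularity of $L$ means the rows of $C$ indexed by $D$ are combinations of rows of $U$ indexed by $D$. The support of those rows of $U$ lies in the up-set generated by $D$. This gives rank or support constraints, but the up-set may contain everything, so it is not conclusive.

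\emph{Main obstacle and a robust alternative.} Finding the right invariant is the crux, and I would use a dimension count instead. The products $LU$ form the image of a polynomial map
$$\cL(P)\times\cU(P)\to\C^{P\times P}.$$
This map is invariant under $(L,U)\mapsto(LD,D^{-1}U)$ for invertible diagonal $D$, whose fibers have dimension at least $d$. Hence the image has complex dimension at most
$$\dim\cL+\dim\cU-d=2\,|\{(a,b)\mid a\leq b\}|-d.$$
The number of comparable ordered pairs with $a\le b$ equals $d+c$, where $c$ is the number of strictly comparable unordered pairs. The image dimension is therefore at most $d+2c$. If some pair is incomparable, then $c<\binom d2$, so $d+2c<d^2$. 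The closure of the image is then contained in a proper algebraic subvariety, which is nowhere dense and in particular not all of $\C^{P\times P}$.

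The step requiring care is the torus-fiber argument. Formally, I would note that the map factors through the quotient by the torus action on the open set where $L$ has invertible diagonal. The generic fiber therefore has dimension $\ge d$, which gives $\dim\overline{\mathrm{im}}\le d+2c$. The remaining locus, where $L$ has a zero diagonal entry, is handled by continuity or a separate count of the same size. The closure of a constructible set of dimension less than $d^2$ cannot be all of $\C^{d^2}$.
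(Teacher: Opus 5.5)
Your proof is correct and follows essentially the same route as the paper. For the linear case you use density of matrices with nonzero leading principal minors, and for the converse you show by a dimension count that, once the $d$-dimensional diagonal rescaling $(L,U)\mapsto(LD,D^{-1}U)$ is removed, the products $LU$ lie in an algebraic set of dimension at most $d+2c<d^2$ whenever $c<\binom{d}{2}$.

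The paper builds the quotient concretely through the unit-diagonal slice $\cL_1(P)$: it writes an invertible $LU$ as $(LD^{-1})(DU)$ and places singular products in $\{\det=0\}$, so it never needs the fiber-dimension theorem. Your continuity argument for the zero-diagonal locus works equally well. The exploratory attempts with $2\times2$ minors and down-sets lead nowhere and can be deleted.
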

\begin{proof}
If $P$ is linearly ordered, we can identify $P$ with $[d]$ equipped with the usual order. Then an invertible matrix $M$ admits an LU-factorization $M=LU$ with $L\in\cL(P), U\in\cU(P)$ iff all leading principal minors are nonzero \cite[Cor. 3.5.4]{horn13}. Since the excluded set is a proper algebraic variety, this implies density in the norm topology, i.e., $$\overline{\cL(P)\cU(P)}=\C^{P\times P}.$$ 
    
To  prove the `only if' part, let $\dim_\C\cU(P)=d+k$, i.e., $k$ is the number of  comparable distinct pairs in $P$. Define $\cL_1(P)\coloneqq\{L\in\cL(P)|L_{xx}=1 \text{ for all } x\in P\}$, and the polynomial map $$\Phi:\cL_1(P)\times\cU(P)\rightarrow\C^{P\times P},\qquad (L,U)\mapsto LU.$$ 
Then $\dim_\C\cL_1(P)=k$, so that the domain of $\Phi$ has algebraic dimension $d+2k$ and the Zariski closure of its image has dimension at most $d+2k$.

Every invertible $M\in\cL(P)\cU(P)$ belongs to the image of $\Phi$: indeed, if $M=LU$ is invertible, then $L$ and $D\coloneqq{\mathrm diag}(L)$ are invertible, and we have $$M=(LD^{-1})(DU),$$
where $LD^{-1}\in\cL_1(P)$ and $DU\in\cU(P)$. If $P$ is not linearly ordered, then $k<\binom{d}{2}$, and therefore
$$d+2k<d+2\binom{d}{2}=d^2.$$
Thus the image of $\Phi$ is contained in a proper algebraic subset of $\C^{P\times P}$, and since the invertible matrices are dense, this means that $\cL(P)\cU(P)$ cannot be dense unless $k=\binom{d}{2}$.

\end{proof}

If the change-of-basis matrix of a pair of Hermitian matrices does admit an LU-approximation, then stronger majorization relations can be obtained:
\begin{theorem}[Refined Ky Fan majorization for eigenvalues]\label{thm:KFlambda} Let $P$ be a finite poset, $\alpha,\beta: P\rightarrow\R $ order-decreasing, and $$A=\sum_{i\in P}\alpha(i)|a_i\rangle\langle a_i|,\qquad B=\sum_{j\in P}\beta(j)|b_j\rangle\langle b_j|$$ spectral decompositions in orthonormal bases indexed by $P$. If $C_{ij}:=\langle a_i,b_j\rangle$ satisfies $C\in\overline{\cL(P)\cU(P)}$, then
    \begin{equation}\label{eq:KyFanP}
        \lambda(A+B)\prec \lambda(D_{\alpha+\beta})\prec\lambda(A)+\lambda(B),
    \end{equation}
    where $D_{\alpha+\beta}$ is the diagonal matrix with diagonal entries $\big(\alpha(x)+\beta(x)\big)_{x\in P}$.
\end{theorem}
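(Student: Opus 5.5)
The plan is to prove the two majorization relations in \eqref{eq:KyFanP} separately. The right-hand relation $\lambda(D_{\alpha+\beta})\prec\lambda(A)+\lambda(B)$ is elementary. For any $R\in\binom{P}{k}$ we have $(\alpha+\beta)(R)=\alpha(R)+\beta(R)\le\lambda_{[k]}(A)+\lambda_{[k]}(B)$, because $\alpha(R)$ is a sum of $k$ eigenvalues of $A$ and $\beta(R)$ a sum of $k$ eigenvalues of $B$. Taking the maximum over $R$ gives $\lambda_{[k]}(D_{\alpha+\beta})\le\lambda_{[k]}(A)+\lambda_{[k]}(B)$. For $k=d$ both sides equal $\tr{A}+\tr{B}$. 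The trace equality $\tr{A+B}=\tr{D_{\alpha+\beta}}$ for the left relation is equally immediate. So the real content is the inequality
\begin{equation*}
\lambda_{[k]}(A+B)\le\max_{R\in\binom{P}{k}}(\alpha+\beta)(R)=\lambda_{[k]}(D_{\alpha+\beta})\quad\text{for all }k\in[d].
\end{equation*}

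For this I will start from \Cref{lem:detVST}. It suffices to show the following claim: whenever $S,T\in\binom{P}{k}$ satisfy $\det C_{S,T}\neq0$, there is some $R\in\binom{P}{k}$ with $\alpha(S)+\beta(T)\le(\alpha+\beta)(R)$. First I reduce from the closure to an honest product. Choose $L_n\in\cL(P)$ and $U_n\in\cU(P)$ with $L_nU_n\to C$. Since minors are continuous, $\det(L_nU_n)_{S,T}\neq0$ for some sufficiently large $n$. The conclusion does not involve $n$, so I may assume $C=LU$ exactly, with $L\in\cL(P)$ and $U\in\cU(P)$.

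Next I apply Cauchy--Binet:
\begin{equation*}
\det C_{S,T}=\sum_{R\in\binom{P}{k}}\det L_{S,R}\,\det U_{R,T}.
\end{equation*}
Since this is nonzero, some $R$ has $\det L_{S,R}\neq0$ and $\det U_{R,T}\neq0$. Expanding $\det L_{S,R}$ as a sum over bijections, a nonzero term gives a bijection $\pi:R\to S$ with $L_{\pi(r),r}\neq0$ for every $r$. By the definition of $\cL(P)$ this means $r\le\pi(r)$. Because $\alpha$ is order-decreasing, $\alpha(\pi(r))\le\alpha(r)$, and summing over $r$ gives $\alpha(S)\le\alpha(R)$. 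In the same way, $\det U_{R,T}\neq0$ gives a bijection $\tau:R\to T$ with $r\le\tau(r)$, hence $\beta(T)\le\beta(R)$. Adding the two inequalities proves the claim.

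The step I expect to be the crux is this combinatorial reading of the nonvanishing minors of poset-triangular matrices as order-respecting bijections, combined with Cauchy--Binet. Once it is in place, everything else is bookkeeping.
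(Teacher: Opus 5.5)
Your proof is correct and follows the paper's argument. The shared steps are \Cref{lem:detVST}, replacing $C$ by $L_nU_n$ for large $n$, Cauchy--Binet to produce $R$, and the aligned-sum bound for the right-hand relation. The only difference is how you obtain $\alpha(S)\le\alpha(R)$ and $\beta(T)\le\beta(R)$: you read off an order-respecting bijection from a nonvanishing Leibniz term, whereas the paper shifts $\alpha\ge0$ and integrates the rank count $|S\cap F_t|\le|R\cap F_t|$ over superlevel sets $F_t$. Your version is shorter, and it is exactly the argument the paper itself uses in the proof of \Cref{thm:HornP}.
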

\emph{Remark:} Note that for $P=[d]$ with the standard order, this reduces to the classical Ky Fan relation since, in this case, $\lambda(D_{\alpha+\beta})=\lambda(A)+\lambda(B)$.
\begin{proof}
    Set $d:=|P|$. The left majorization relation in Eq.(\ref{eq:KyFanP}) is a consequence of the fact that the following chain of inequalities holds for any $k\in[d]$, with  the equality at $k=d$:
    \begin{equation}\label{eq:KFPproofineqs}
         \lambda_{[k]}(A+B)\leq\max_{S,T\in\binom{P}{k}}\{\alpha(S)+\beta(T)\mid\det C_{S,T}\neq 0\}\leq \lambda_{[k]}(D_{\alpha+\beta}).
    \end{equation}
    The first inequality in Eq.(\ref{eq:KFPproofineqs}) is just \Cref{lem:detVST}. To prove the second inequality, consider any $S,T\in\binom{P}{k}$ with $\det C_{S,T}\neq 0$. Choose sequences $L_n\in\cL(P)$, $U_n\in\cU(P)$ with $L_n U_n\rightarrow C$. By continuity of the determinant,  $\det (L_n U_n)_{S,T}\neq 0$ holds for sufficiently large $n$. We fix one such $n$ and apply Cauchy-Binet:
    \begin{equation}
        \det(L_n U_n)_{S,T} =\sum_{R\in\binom{P}{k}} \det\big((L_n)_{S,R}\big)\det\big((U_n)_{R,T}\big)\neq 0.
    \end{equation}
    Hence, there is some $R$ for which $\det\big((L_n)_{S,R}\big)\neq 0$ and $\det\big((U_n)_{R,T}\big)\neq 0$. We will show first that $\alpha(S)\leq\alpha(R)$. Adding a constant to $\alpha$ changes both $\alpha(S)$ and $\alpha(R)$ by the same amount, since $|S|=|R|$. We can therefore assume that $\alpha\geq 0$. For $t\in[0,\infty)$, define
    $$F_t\coloneqq\{x\in P\mid \alpha(x)> t\},\qquad S_t\coloneqq S\cap F_t,\qquad R_t\coloneqq R\cap F_t .$$
    Suppose that $s\in S_t$ and $ r\in R\setminus R_t$. Then $\alpha(r)<\alpha(s)$, which implies $s\ngeq r$, so that   $ (L_n)_{s,r}=0$ since $L_n$ is lower triangular. Consequently, all nonzero columns of $(L_n)_{S_t,R}$  already occur in $(L_n)_{S_t,R_t}$. Since $(L_n)_{S,R}$ is nonsingular, all its rows, especially those indexed by $S_t$, are linearly independent. Therefore 
    $$|S_t|=\rank{(L_n)_{S_t,R}}=\rank{(L_n)_{S_t,R_t}}\leq |R_t|.$$
    By nonnegativity of $\alpha$, we can for any $x\in P$ write $\alpha(x)=\int_0^\infty \chi(t) dt$, with $\chi(t)\coloneqq 1$ if $\alpha(x)>t$ and $\chi(t)=0$ otherwise. Summing this identity term by term and using the preceding cardinality inequality then gives
    \begin{equation}
        \alpha(S)=\int_0^\infty |S_t|\; dt\leq \int_0^\infty |R_t|\; dt=\alpha(R).
    \end{equation}
    Similarly, by considering columns instead of rows and upper-triangularity instead of lower-triangularity, we show that $\beta(T)\leq\beta(R)$. Thus
    $$\alpha(S)+\beta(T)\leq \alpha(R)+\beta(R)\leq\lambda_{[k]}\big(D_{\alpha+\beta}\big),$$ which proves the second inequality in Eq.(\ref{eq:KFPproofineqs}).

    Finally, the right majorization relation in Eq.(\ref{eq:KyFanP}) is simply a consequence of the fact that the sum of any $k$ entries of a real vector never exceeds the sum of its $k$ largest entries. Hence, the aligned sum majorizes any non-aligned sum.
\end{proof}
Before we continue, it might be illustrative to look at a simple example:
\begin{example}\label{ex:ex1}
    Let $P=\{p,q,r\}$ with $p\leq r$ and $q\leq r$, while $p$ and $q$ are incomparable. In the order $(p,q,r)$, take\vspace*{-6pt}
    $$ \alpha=(3,4,0),\qquad \beta=(4,3,0),\qquad
 C=\begin{bmatrix}0&0&1\\0&1&0\\1&0&0\end{bmatrix}.\vspace*{-5pt}$$
 Then $C$ admits an LU-approximation $\lim_{n\rightarrow\infty} L_nU_n=C$ with
 $$
 L_n=\left[
\begin{array}{c c c}
  \cellcolor{gray!25}1 & 0
      & 0  \\
  0 & \cellcolor{gray!25}1
      & 0  \\ \cellcolor{gray!25}
  n & \cellcolor{gray!25}0
      & \cellcolor{gray!25}1
\end{array}
\right],\quad
 \quad
 U_n =
\left[
\begin{array}{c c >{\columncolor{gray!25}}c}
  \cellcolor{gray!25}1/n & 0
      & 1  \\
  0 & \cellcolor{gray!25}1
      & 0  \\
  0 & 0
      & -n
\end{array}
\right],\quad L_n U_n
 =\begin{bmatrix}1/n&0&1\\0&1&0\\1&0&0\end{bmatrix}.
$$
Here, the gray boxes indicate all entries that are allowed to be nonzero under the conditions $L_n\in\cL(P)$ and $U_n\in\cU(P)$.
\Cref{thm:KFlambda} applied to $A=D_\alpha, B=CD_\beta C^*$ s.t. $A+B={\rm diag}(3,7,4)$ then gives $\lambda(A+B)\prec(7,7,0)$, whereas the classical Ky Fan majorization relation only yields the weaker statement $\lambda(A+B)\prec(8,6,0)$.
\end{example}
Applying convex trace functions, any majorization relation yields a multitude of additional inequalities. Here, we just state one of them: 
\begin{corollary}[Refined von Neumann trace inequality]\label{cor:vNtrace}
    Under the assumptions of \Cref{thm:KFlambda}, for Hermitian matrices $A,B\in\C^{P\times P}$ with order-decreasing spectral functions $\alpha,\beta:P\rightarrow\R$, and a change-of-basis matrix $C\in\overline{\cL(P)\cU(P)}$, we have
    \begin{equation}\label{eq:vNtrace}
        \tr{AB}\leq\sum_{x\in P}\alpha(x)\beta(x).
    \end{equation}
    More generally, if $f,g:I\rightarrow\R$ are  nondecreasing on an interval $I\subseteq\R$ containing the spectra of $A,B$, then 
    \begin{equation}\label{eq:vNtracefg}
        \tr{f(A)g(B)}\leq\sum_{x\in P}f\big(\alpha(x)\big)g\big(\beta(x)\big).
    \end{equation}
\end{corollary}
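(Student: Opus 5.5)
The plan is to derive \eqref{eq:vNtrace} from the left majorization relation in \Cref{thm:KFlambda}, using the convex function $x\mapsto x^2$, and then to obtain \eqref{eq:vNtracefg} by applying the first statement to $f(A)$ and $g(B)$.

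\emph{Step 1 (the case $f=g=\mathrm{id}$).} Under the stated assumptions, \Cref{thm:KFlambda} gives $\lambda(A+B)\prec\lambda(D_{\alpha+\beta})$. By the Hardy--Littlewood--P\'olya (Karamata) characterization of majorization, $x\prec y$ implies $\sum_i\phi(x_i)\leq\sum_i\phi(y_i)$ for every convex $\phi:\R\rightarrow\R$. Taking $\phi(x)=x^2$ yields
\begin{equation*}
\tr{(A+B)^2}=\sum_i\lambda_i(A+B)^2\leq\sum_{x\in P}\big(\alpha(x)+\beta(x)\big)^2 .
\end{equation*}
The left-hand side expands as $\tr{A^2}+\tr{B^2}+2\,\tr{AB}$, using $\tr{BA}=\tr{AB}$. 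The spectral decompositions give $\tr{A^2}=\sum_{x}\alpha(x)^2$ and $\tr{B^2}=\sum_x\beta(x)^2$. These terms cancel against the corresponding terms on the right, leaving $2\,\tr{AB}\leq 2\sum_x\alpha(x)\beta(x)$, which is \eqref{eq:vNtrace}.

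\emph{Step 2 (general $f,g$).} By functional calculus,
\begin{equation*}
f(A)=\sum_{i\in P}f\big(\alpha(i)\big)|a_i\rangle\langle a_i|,\qquad g(B)=\sum_{j\in P}g\big(\beta(j)\big)|b_j\rangle\langle b_j| .
\end{equation*}
These are spectral decompositions of $f(A)$ and $g(B)$ in the same orthonormal bases $(a_i)$ and $(b_j)$, so the change-of-basis matrix is the same $C\in\overline{\cL(P)\cU(P)}$. The new spectral functions are $f\circ\alpha$ and $g\circ\beta$. They are order-decreasing: if $x\leq y$ then $\alpha(x)\geq\alpha(y)$, and since $f$ is nondecreasing, $f(\alpha(x))\geq f(\alpha(y))$; the same argument applies to $g\circ\beta$. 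So all hypotheses of the first part hold for the pair $f(A),g(B)$, and Step 1 gives \eqref{eq:vNtracefg}.

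There is no real obstacle here. The only points to check are the following:
\begin{itemize}
\item The theorem does not require the spectral functions to be injective, so repeated eigenvalues of $f(A)$ or $g(B)$ cause no problem.
\item The identity $\tr{(A+B)^2}=\sum_i\lambda_i(A+B)^2$ holds because $A+B$ is Hermitian.
\end{itemize}
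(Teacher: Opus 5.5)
Your proof is correct and follows the paper's own route. You apply the convex function $t\mapsto t^2$ to $\lambda(A+B)\prec\lambda(D_{\alpha+\beta})$, expand and cancel the $\tr{A^2}$ and $\tr{B^2}$ terms, and then get the general case from the fact that $f\circ\alpha$ and $g\circ\beta$ are order-decreasing spectral functions with the same change-of-basis matrix $C$.
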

\begin{proof}
    Due to convexity of the function $t\mapsto t^2$, the majorization relation in \Cref{thm:KFlambda} implies
    \begin{equation}
        \begin{aligned}
            2\tr{AB} &=\tr{(A+B)^2}-\tr{A^2+B^2}\\
            &\leq\sum_{x\in P}\big(\alpha(x)+\beta(x)\big)^2-\sum_{x\in P}\big(\alpha(x)^2+\beta(x)^2\big)\\
            &=2\sum_{x\in P}\alpha(x)\beta(x).
        \end{aligned}
    \end{equation}
    Eq.(\ref{eq:vNtracefg}) follows from Eq.(\ref{eq:vNtrace}) since $f\circ\alpha$, and $g\circ\beta$ are still order-decreasing. For instance, $x\leq y\Rightarrow\alpha(x)\geq\alpha(y)\Rightarrow f(\alpha(x))\geq f(\alpha(y))$.
\end{proof}
\begin{example}[Information-theoretic quantities] Let $A=\rho_\alpha, B=\rho_\beta$ be  density matrices and $\alpha,\beta$ the corresponding spectral functions. Then Eq.(\ref{eq:vNtracefg}) yields bounds on various information theoretic quantities by specifying certain $f,g$.

If we choose $f(t)=t, g(t)=\log(t)$, then Eq.(\ref{eq:vNtracefg}) leads to the relative entropy bound
\begin{equation}
    D(\rho_\alpha\|\rho_\beta)\geq\sum_{x\in P}\alpha(x)\log\frac{\alpha(x)}{\beta(x)}.
\end{equation}
If we choose $f(t)=t^s, g(t)=t^{1-s}$ for $s\in(0,1)$ we obtain a bound on the \emph{quantum Chernoff coefficient} via
\begin{equation}
    \tr{\rho_\alpha^s\rho_\beta^{1-s}}\leq\sum_{x\in P}\alpha(x)^s \beta(x)^{1-s}.
\end{equation}
\end{example}

Now we drop the Hermiticity assumption, and look at the singular values of a sum of matrices:
\begin{theorem}[Refined Ky Fan majorization for singular values]\label{thm:KF} Let $P$ be a finite poset, $\alpha,\beta: P\rightarrow[0,\infty) $ order-decreasing, and $$A=VD_\alpha W,\qquad B=\tilde{V}D_\beta\tilde{W}$$ singular value decompositions of matrices in $\C^{P\times P}$.  If  $V^*\tilde V,W\tilde W^*\in\overline{\cL(P)\cU(P)}$, then\vspace*{-5pt}
    \begin{equation}\label{eq:KyFanPsigma}
        \sigma(A+B)\prec _w\lambda(D_{\alpha+\beta})\prec\sigma(A)+\sigma(B),
    \end{equation}
    where $D_{\alpha+\beta}$ is the diagonal matrix with diagonal entries $\big(\alpha(x)+\beta(x)\big)_{x\in P}$. We note that for a non-negative diagonal matrix its singular values coincide with its eigenvalues. 
\end{theorem}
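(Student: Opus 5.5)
The natural route is a Hermitian dilation that reduces the statement to \Cref{thm:KFlambda}, but the dilation doubles the poset and spoils the index alignment. Instead, I would mimic the proof of \Cref{thm:KFlambda} directly, working with $k$-th compounds. The right relation in Eq.(\ref{eq:KyFanPsigma}) holds for the same reason as before: the sum of any $k$ entries of $\alpha+\beta$ is at most $\sigma_{[k]}(A)+\sigma_{[k]}(B)$, and equality holds at $k=d$. So the task is to show, for every $k\in[d]$,
\[
\sigma_{[k]}(A+B)\le \lambda_{[k]}(D_{\alpha+\beta}).
\]

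First I use the variational formula $\sigma_{[k]}(M)=\max|\tr{XM}|$ over partial isometries $X$ of rank $k$. Equivalently, $\sigma_{[k]}(M)=\sigma_1(M^{(k)})$, where $M^{(k)}$ is the multiplicative $k$-th compound. Since compounds are not additive, I would use the trace formula. Fix $X$ with $X=YZ^*$, where $Y,Z$ are $d\times k$ isometries. Then
\[
|\tr{X(A+B)}|\le |\tr{Z^*VD_\alpha WY}|+|\tr{Z^*\tilde V D_\beta\tilde W Y}|.
\]

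The key step is to bound both terms jointly by $\alpha(R)+\beta(R)$ for a single set $R$. By Cauchy–Binet applied to the compound, $\tr{Z^*VD_\alpha WY}$ equals the pairing of $\wedge^k(V^*Z)$ and $\wedge^k(WY)$. Hence it is at most $\max_S \alpha(S)$, the maximum taken over $S$ with nonzero Plücker coordinates of $V^*Z$ (rows $S$) and $WY$ (columns $S$). This is not quite linear in $\alpha$, so I would linearise it by evaluating on the frame vectors directly.

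A cleaner option mirrors \Cref{lem:ApB}. Let $H(M)$ denote the Hermitian dilation with off-diagonal blocks $M$ and $M^*$, so that $\lambda_{[k]}(H(M))=\sigma_{[k]}(M)$ for $k\le d$. Write $H(A)$ with eigenvectors $(v_x,\pm w_x^*)/\sqrt2$ and eigenvalues $\pm\alpha(x)$, and $H(B)$ similarly with $\tilde v,\tilde w$. Then apply \Cref{lem:detVST} on the $2d$-element index set $P\times\{\pm\}$. The overlap matrix has blocks $\tfrac12(V^*\tilde V\pm \overline{W\tilde W^*})$-type combinations. This yields
\[
\sigma_{[k]}(A+B)\le\max\bigl\{\alpha(S_+)-\alpha(S_-)+\beta(T_+)-\beta(T_-)\bigr\},
\]
with the maximum over pairs of size-$k$ sets having a nonzero overlap minor.

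Since $\alpha,\beta\ge0$, only the positive parts matter, and $|S_+|,|T_+|\le k$. I then want $R$ with $|R|\le k$ such that $\alpha(S_+)\le\alpha(R)$ and $\beta(T_+)\le\beta(R)$. To obtain $R$, I expand the relevant minor via Laplace/Cauchy–Binet into products of minors of $V^*\tilde V$ and $W\tilde W^*$. I then approximate each by $L_nU_n$, as in the proof of \Cref{thm:KFlambda}, and run the level-set argument, which holds verbatim for subsets of size at most $k$ because $\alpha\ge0$. The final step is $\alpha(R)+\beta(R)\le\lambda_{[k]}(D_{\alpha+\beta})$.

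The main obstacle is this combinatorial step. The dilation mixes the two change-of-basis matrices, so a single nonzero minor of the dilated overlap does not directly give a common $R$ for both $V^*\tilde V$ and $W\tilde W^*$. I expect to need the identity $\tr{X(A+B)}=\sum_{x}\alpha(x)\langle\cdot\rangle+\dots$ with $X$ chosen optimally as the polar part of $A+B$. I would also choose $Y,Z$ spanning the top singular spaces and use $\det$-nonvanishing for both factors simultaneously. If this fails, I would fall back to handling the case $W\tilde W^*=\1$-like structures via a perturbation argument.
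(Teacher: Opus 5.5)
Your argument for the right-hand relation is fine, but you never prove the left-hand relation $\sigma_{[k]}(A+B)\le\lambda_{[k]}(D_{\alpha+\beta})$. Both of your routes stop at the decisive step, as your last paragraph concedes.

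\textbf{The dilation route.} The overlap matrix of $H(A)$ and $H(B)$ has blocks $\tfrac12(V^*\tilde V\pm W\tilde W^*)$, up to conjugation conventions. Its minors are therefore minors of \emph{sums} of the two change-of-basis matrices. Expanding such a minor produces nonzero minors of $V^*\tilde V$ and of $W\tilde W^*$ on different row and column sets. The level-set argument then yields two unrelated sets, not one $R$ with $\alpha(S_+)\le\alpha(R)$ and $\beta(T_+)\le\beta(R)$. There is also no reason for the dilated overlap to inherit an LU-approximation on $P\times\{\pm\}$ from those of $V^*\tilde V$ and $W\tilde W^*$, because $\overline{\cL(P)\cU(P)}$ is in general not closed under addition. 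So \Cref{lem:detVST} together with the argument of \Cref{thm:KFlambda} cannot simply be run on the doubled index set.

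\textbf{The first route.} This one contains outright errors. First, $\sigma_1(\wedge^k M)=\prod_{i\le k}\sigma_i(M)$, not $\sigma_{[k]}(M)$. Second, Cauchy--Binet describes $\det(Z^*VD_\alpha WY)$, not $\tr{Z^*VD_\alpha WY}$, which is simply linear in $\alpha$. After the split $|\tr{X(A+B)}|\le|\tr{XA}|+|\tr{XB}|$ you would still need the joint bound by $\alpha(R)+\beta(R)$, and that bound is never supplied.

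\textbf{The missing idea.} No common $R$ is needed: you can decouple the left and right change-of-basis matrices \emph{before} invoking \Cref{thm:KFlambda}.
\begin{enumerate}
\item Factor $A+B=GK$ with $G=\begin{pmatrix}VD_\alpha^{1/2}&\tilde VD_\beta^{1/2}\end{pmatrix}$ and $K=\begin{pmatrix}D_\alpha^{1/2}W\\ D_\beta^{1/2}\tilde W\end{pmatrix}$.
\item Then $GG^*=VD_\alpha V^*+\tilde VD_\beta\tilde V^*$ is a sum of two Hermitian matrices with spectral functions $\alpha,\beta$ on $P$ and change-of-basis matrix $V^*\tilde V$.
\item Likewise $K^*K=W^*D_\alpha W+\tilde W^*D_\beta\tilde W$ has change-of-basis matrix $W\tilde W^*$.
\item \Cref{thm:KFlambda} applies to each separately and gives $\lambda_{[k]}(GG^*)\le\lambda_{[k]}(D_{\alpha+\beta})$ and $\lambda_{[k]}(K^*K)\le\lambda_{[k]}(D_{\alpha+\beta})$.
\item Ky Fan's variational formula and the Hilbert--Schmidt Cauchy--Schwarz inequality finish the proof: $\sigma_{[k]}(A+B)=\max_{E^*E=F^*F=\1_k}|\tr{E^*GKF}|\le\sqrt{\lambda_{[k]}(GG^*)\,\lambda_{[k]}(K^*K)}\le\lambda_{[k]}(D_{\alpha+\beta})$.
\end{enumerate}
Your instinct to reduce to \Cref{thm:KFlambda} was right. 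The Hermitian dilation is the wrong vehicle; this Gram-type factorization, which is what the paper uses, is the right one.
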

\begin{proof}
    We can write $A+B=LR$ with
$$
 L\coloneqq \begin{pmatrix}VD_\alpha^{1/2}&\tilde V D_\beta^{1/2}\end{pmatrix},
 \qquad
R\coloneqq\begin{pmatrix}D_\alpha^{1/2}W\\D_\beta^{1/2}\tilde W\end{pmatrix}.
$$ Applying \Cref{thm:KFlambda}, we obtain
\begin{equation*}
    \begin{aligned}
    \lambda_{[k]}(LL^*)&=\lambda_{[k]}\big(VD_\alpha V^*+\tilde VD_\beta \tilde V^*\big)\leq \lambda_{[k]}(D_{\alpha+\beta}),\ \text{ and}\\
    \lambda_{[k]}(R^* R)&=\lambda_{[k]}\big(W^*D_\alpha W+\tilde W^* D_\beta \tilde W\big)\leq \lambda_{[k]}(D_{\alpha+\beta}).
    \end{aligned}
\end{equation*}
Using Ky Fan's variational formula and the Hilbert--Schmidt Cauchy--Schwarz inequality, this leads to the first majorization relation in Eq.(\ref{eq:KyFanPsigma}) since
\begin{equation*}
    \sigma_{{[k]}}(A+B)=\max_{E^*E=F^*F=\1_k}\big|\tr{E^*LRF}\big|\leq\sqrt{\lambda_{[k]}(LL^*)\lambda_{[k]}(R^*R)}\leq \lambda_{[k]}(D_{\alpha+\beta}).
\end{equation*} The second majorization relation again follows from the fact that the aligned sum majorizes every non-aligned sum.
\end{proof}

Finally, we turn to the singular values of a product of matrices:

\begin{theorem}[Refined Horn log-majorization for singular values]\label{thm:HornP}
Let $P$ be a finite poset, let $\alpha,\beta:P\rightarrow[0,\infty)$ be order-decreasing, and let
$$
    A=VD_\alpha W,\qquad B=\tilde V D_\beta\tilde W
$$
be singular value decompositions of matrices in $\C^{P\times P}$. If
$
    C:=W\tilde V\in\overline{\cL(P)\cU(P)},
$
then\vspace*{-5pt}
\begin{equation}\label{eq:HornP}    \sigma(AB)\prec_{\log}\lambda(D_{\alpha\beta})\prec_{\log}\sigma(A)*\sigma(B),
\end{equation}
where $D_{\alpha\beta}$ is the diagonal matrix with diagonal entries $\big(\alpha(x)\beta(x)\big)_{x\in P}$.
\end{theorem}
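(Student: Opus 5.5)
We reduce the statement to a multiplicative analogue of the Cauchy--Binet argument in the proof of \Cref{thm:KFlambda}, using compound matrices. The $k$-th multiplicative compound satisfies $C_k(AB)=C_k(A)C_k(B)$ and $\sigma_1(C_k(M))=\prod_{i=1}^k\sigma_i(M)$. So it suffices to show, for every $k\in[d]$,
\begin{equation*}
\prod_{i=1}^k\sigma_i(AB)\le \max_{R\in\binom{P}{k}}\prod_{x\in R}\alpha(x)\beta(x),
\end{equation*}
and to verify equality at $k=d$. The equality at $k=d$ is immediate: $|\det(AB)|=|\det V\det W\det\tilde V\det\tilde W|\prod_x\alpha(x)\beta(x)=\prod_x\alpha(x)\beta(x)$ because all four factors are unitary.

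For $k<d$, write $C_k(AB)=C_k(V)\,C_k(D_\alpha)\,C_k(C)\,C_k(D_\beta)\,C_k(\tilde W)$, which has the same largest singular value as the middle product $M:=C_k(D_\alpha)C_k(C)C_k(D_\beta)$. Its entries are $M_{S,T}=\alpha^{\Pi}(S)\det C_{S,T}\,\beta^{\Pi}(T)$, with $\alpha^{\Pi}(S):=\prod_{x\in S}\alpha(x)$. Instead of bounding the operator norm of $M$ crudely, we first approximate $C$ and then factor. Take $C_n=L_nU_n\to C$ with $L_n\in\cL(P)$ and $U_n\in\cU(P)$, and write
\begin{equation*}
D_\alpha C_n D_\beta=(D_\alpha L_n D_\alpha^{-1})\,D_{\alpha\beta}\,(D_\beta^{-1}U_nD_\beta).
\end{equation*}
This step only makes sense when $\alpha,\beta>0$. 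We handle zero values by perturbing: replace $\alpha,\beta$ by $\alpha+\epsilon,\beta+\epsilon$, which keeps both order-decreasing, and use continuity of singular values. We cannot simply bound the outer factors in norm, since $D_\alpha L_n D_\alpha^{-1}$ is lower triangular with entries $\alpha(s)L_{sr}/\alpha(r)$, $r\le s$, and $\alpha(r)\ge\alpha(s)$, so these entries are controlled only by $L_n$, which is unbounded as $n\to\infty$.

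For this reason we argue at the level of the compound. By Cauchy--Binet,
\begin{equation*}
M^{(n)}_{S,T}=\sum_R \alpha^{\Pi}(S)\det(L_n)_{S,R}\det(U_n)_{R,T}\beta^{\Pi}(T),
\end{equation*}
and from the proof of \Cref{thm:KFlambda} (its rank/level-set argument, applied to $\log\alpha$) every nonzero term has $\alpha^{\Pi}(S)\le\alpha^{\Pi}(R)$ and $\beta^{\Pi}(T)\le\beta^{\Pi}(R)$. This is not enough by itself, because summing over $R$ with unbounded coefficients does not bound the norm. The cleaner route is the one used for \Cref{thm:KF}, transferred to the additive setting via the logarithm. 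Fix $k$ and set $N:=C_k(D_\alpha)C_k(C)C_k(D_\beta)$. We need $\|N\|\le\max_R\alpha^{\Pi}(R)\beta^{\Pi}(R)$. Writing $N^*N$ relates to $C_k(D_\beta)C_k(C)^*C_k(D_\alpha^2)C_k(C)C_k(D_\beta)$, whose largest eigenvalue equals that of $C_k(D_\alpha^2)^{1/2}C_k(C)C_k(D_\beta^2)C_k(C)^*C_k(D_\alpha^2)^{1/2}$. Then we use Lemma~\ref{lem:ApB}-type reasoning in Golden--Thompson form:
\begin{equation*}
\lambda_{\max}(e^{X/2}e^{Y}e^{X/2})=\lim_t\tfrac1t\log\mathrm{tr}\big[(e^{X/2}e^Ye^{X/2})^{t}\big].
\end{equation*}
By Araki--Lieb--Thirring this is at most $\lim_t\tfrac1t\log\mathrm{tr}[e^{tX}e^{tY}]$, with $X=2\log C_k(D_\alpha)$ and $Y=2C_k(C)\log C_k(D_\beta)C_k(C)^*$. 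Evaluated in eigenbases, this limit is $\max\{2\log\alpha^{\Pi}(S)+2\log\beta^{\Pi}(T)\mid \det C_{S,T}\ne0\}$, exactly as in \Cref{lem:detVST}. The second inequality of \Cref{thm:KFlambda}, applied to the order-decreasing functions $\log\alpha,\log\beta$, bounds this by $2\max_R\log(\alpha^{\Pi}(R)\beta^{\Pi}(R))=2\log\prod_{i\le k}\lambda_i(D_{\alpha\beta})$.

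The main obstacle is the Araki--Lieb--Thirring/limit step: it must be justified that $\lim\frac1t\log\mathrm{tr}[(e^{X/2}e^Ye^{X/2})^t]$ equals $\log\lambda_{\max}$, which holds since this is a fixed positive matrix, and that the inequality $\mathrm{tr}[(e^{X/2}e^Ye^{X/2})^t]\le\mathrm{tr}[e^{tX/2}e^{tY}e^{tX/2}]$ holds for $t\ge1$, which is Araki--Lieb--Thirring. The right relation in Eq.~(\ref{eq:HornP}) follows as before: any product of $k$ entries $\alpha(x)\beta(x)$ is at most the product of the $k$ largest $\alpha$ values times the $k$ largest $\beta$ values, with equality of full products at $k=d$.
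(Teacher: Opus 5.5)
Your final argument is correct and is essentially the paper's proof. After passing to the $k$-th compound, you bound $\|C_k(D_\alpha)C_k(C)C_k(D_\beta)\|_\infty^2$ by $\lim_{t\to\infty}\big(\mathrm{tr}[e^{tX}e^{tY}]\big)^{1/t}=\max\{(\alpha^{\Pi}(S)\beta^{\Pi}(T))^2\mid\det C_{S,T}\neq0\}$ via Araki--Lieb--Thirring, where the paper gets the identical bound (with $t=1/s$) from Cordes' inequality plus $\|\cdot\|_\infty\le\|\cdot\|_2$. The detour through $D_\alpha L_nD_\alpha^{-1}$ can be dropped, and the left side of your limit display should read $\log\lambda_{\max}$.

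The only real variation is the combinatorial step. You feed $\log\alpha,\log\beta$ into the additive level-set argument of \Cref{thm:KFlambda}, which is what forces your $\epsilon$-perturbation of zero singular values. The paper instead takes a nonzero Leibniz term of $\det(L_n)_{S,R}$ to get a bijection $\pi:S\to R$ with $\pi(s)\le s$, which gives $\alpha^{\Pi}(S)\le\alpha^{\Pi}(R)$ directly even when $\alpha$ vanishes.
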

\begin{proof}
Set $d:=|P|$ and fix $k\in[d]$. Define $\hat{\alpha},\hat{\beta}:\binom{P}{k}\rightarrow[0,\infty)$ via
$$
    \hat{\alpha}(S):=\prod_{x\in S}\alpha(x),\qquad
    \hat{\beta}(S):=\prod_{x\in S}\beta(x).
$$
Since $AB=VD_\alpha C D_\beta\tilde W$, its singular values equal those of $D_\alpha C D_\beta$. Let $\hat{C}:=\wedge^k C$ be the $k$'th exterior power of $C$. Its matrix entries are the minors $(\det C_{S,T})_{S,T\in\binom{P}{k}}$ and since $C$ is unitary, so is $\hat{C}$. From the properties of exterior powers \cite[Ch.I.5]{Bhatia} we obtain
\begin{equation}
     \prod_{i=1}^k\sigma_i(AB)
    =\prod_{i=1}^k \sigma_i(D_\alpha C D_\beta)=\big\|\wedge^k(D_\alpha C D_\beta)\big\|_\infty=\big\|D_{\hat{\alpha}}\hat{C}D_{\hat{\beta}}\big\|_\infty.\label{eq:sigmaAB}
\end{equation}
   Next, we apply Cordes' inequality \cite{Cordes}, which states that positive semidefinite operators $X,Y$ satisfy $\|X^s Y^s\|_\infty\leq\|XY\|_\infty^s$ for all $s\in(0,1]$, to $X\coloneqq D_{\hat{\alpha}}^{1/s}$, $Y\coloneqq \hat{C}D_{\hat{\beta}}^{1/s}\hat{C}^*$. 
In this way, we can bound Eq.(\ref{eq:sigmaAB}) via
\begin{align*}
\big\|D_{\hat{\alpha}}\hat{C}D_{\hat{\beta}}\big\|_\infty&\leq \big\|D_{\hat{\alpha}}^{1/s}\hat{C}D_{\hat{\beta}}^{1/s}\big\|_\infty^s\leq \tr{D_{\hat{\alpha}}^{2/s}\hat{C}D_{\hat{\beta}}^{2/s}\hat{C}^*}^{s/2}\\
&=\bigg[\sum_{S,T\in\binom{P}{k}}\big(\hat{\alpha}(S)\hat{\beta}(T)\big)^{2/s}\big|\hat{C}_{S,T}\big|^2\bigg]^{s/2}. 
\end{align*}
Here, the second inequality is the Hilbert-Schmidt bound $\|\cdot\|_\infty\leq\|\cdot\|_2$.

Combining with Eq.(\ref{eq:sigmaAB}) and taking the limit $s\rightarrow 0$, we obtain 
\begin{equation}\label{eq:Hornsupport}
    \prod_{i=1}^k\sigma_i(AB)
    \leq\max_{\substack{S,T\in\binom{P}{k}\\ \det C_{S,T}\neq0}}
    \hat{\alpha}(S)\hat{\beta}(T).
\end{equation}
For each pair $S,T$ on the right, the Cauchy--Binet argument in the proof of \Cref{thm:KFlambda} gives $R\in\binom{P}{k}$ and nonzero minors of a lower and an upper triangular matrix $L_n$ on $(S,R)$ and $U_n$ on $(R,T)$, respectively. Since $\det (L_n)_{S,R}\neq 0$, the Leibniz expansion contains a nonzero term. Hence, there exists a bijection $\pi:S\rightarrow R$ such that $(L_n)_{s,\pi(s)}\neq 0$ for every $s\in S$. As $L_n\in\cL(P)$, this implies $\pi(s)\leq s$ for all $s\in S$. Similarly, $\det(U_n)_{R,T}\neq 0$ yields a bijection $\tau:R\rightarrow T$ satisfying $r\leq \tau(r)$ for all $r\in R$. Therefore
$$
    \hat{\alpha}(S)\leq\hat{\alpha}(R),\qquad
    \hat{\beta}(T)\leq\hat{\beta}(R).
$$
Together with Eq.(\ref{eq:Hornsupport}), this proves
$$
    \prod_{i=1}^k\sigma_i(AB)
    \leq\max_{R\in\binom{P}{k}}\hat{\alpha}(R)\hat{\beta}(R)
    =\prod_{i=1}^k\lambda_i(D_{\alpha\beta}).
$$
Finally,
$$
    \prod_{i=1}^k\lambda_i(D_{\alpha\beta})
    \leq\max_{\substack{S,T\in\binom{P}{k}}}
    \hat{\alpha}(S)\hat{\beta}(T)
    =\prod_{i=1}^k\sigma_i(A)\sigma_i(B).
$$
For $k=d$, all three products equal $|\det(AB)|$, which proves Eq.(\ref{eq:HornP}).
\end{proof}
\addtocounter{example}{-2}
\begin{example}[continued] From \Cref{thm:HornP}
    we obtain the relation $(12,0,0)\prec_{\log} (12,12,0)\prec_{\log} (16,9,0)$. So again, the intermediate vector is sharper than the classical  bound.
\end{example}


\section{Applications with product structure}\label{sec:apps}

In this section, we discuss various applications that have in common that there is some form of product structure.  This then gives rise to a natural choice of the partial order -- the product order.

\begin{lemma}\label{lem:tensor}
    Let $P=[d_1]\times\cdots\times[d_n]$ be equipped with the product order. For arbitrary $C_i\in\C^{d_i\times d_i}$, we have
    $$C_1\otimes\cdots\otimes C_n\in\overline{\cL(P)\cU(P)}.$$
\end{lemma}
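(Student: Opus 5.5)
Each factor admits an LU-approximation on its own, and tensor products of triangular matrices stay triangular for the product order; the approximation then passes to the tensor product by continuity.

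\textbf{Step 1 (factorwise approximation).} Since $[d_i]$ is linearly ordered, \Cref{lem:linear_order} gives $\overline{\cL([d_i])\cU([d_i])}=\C^{d_i\times d_i}$. Hence for each $i$ we can choose sequences $L^{(i)}_m\in\cL([d_i])$ and $U^{(i)}_m\in\cU([d_i])$ with $L^{(i)}_mU^{(i)}_m\to C_i$ as $m\to\infty$.

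\textbf{Step 2 (closure under tensor products).} We show that
$$L^{(1)}\otimes\cdots\otimes L^{(n)}\in\cL(P)\qquad\text{and}\qquad U^{(1)}\otimes\cdots\otimes U^{(n)}\in\cU(P)$$
whenever each factor is lower (respectively upper) triangular. For $x=(x_1,\ldots,x_n)$ and $y=(y_1,\ldots,y_n)$ in $P$, the entry of the tensor product is
$$\big(L^{(1)}\otimes\cdots\otimes L^{(n)}\big)_{x,y}=\prod_i L^{(i)}_{x_i,y_i}.$$
This product is nonzero only if every factor is nonzero, i.e.\ only if $y_i\le x_i$ for all $i$. That is exactly $y\le x$ in the product order. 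The upper-triangular case is identical with the inequalities reversed.

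\textbf{Step 3 (passing to the limit).} By the mixed-product property,
$$\Big(\bigotimes_i L^{(i)}_m\Big)\Big(\bigotimes_i U^{(i)}_m\Big)=\bigotimes_i L^{(i)}_mU^{(i)}_m.$$
The map $(X_1,\ldots,X_n)\mapsto X_1\otimes\cdots\otimes X_n$ is multilinear on finite-dimensional spaces, hence continuous. Therefore the right-hand side converges to $C_1\otimes\cdots\otimes C_n$. Since the left-hand side lies in $\cL(P)\cU(P)$ by Step 2, the claim follows.

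There is no real obstacle in this argument. The only points needing care are the identification of the product order with the support pattern of tensor products of triangular matrices (Step 2), and the fact that Step 1 must be taken as an approximation rather than an exact factorization, since an LU factorization may fail to exist for singular or degenerate $C_i$.
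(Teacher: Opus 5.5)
Your proposal is correct and follows essentially the same route as the paper: factorwise LU-approximations from \Cref{lem:linear_order}, triangularity preserved under tensor products because a nonzero entry forces the coordinatewise inequality, and the mixed-product property to pass to the limit. Your explicit appeal to continuity of the multilinear map $(X_1,\ldots,X_n)\mapsto X_1\otimes\cdots\otimes X_n$ simply spells out a step the paper leaves implicit.
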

\begin{proof}
    Choose lower and upper triangular matrices $L_k^{(i)}\in\cL([d_i]),U_k^{(i)}\in\cU([d_i])$ such that $L_k^{(i)}U_k^{(i)}\rightarrow C_i$ for $k\rightarrow\infty$. Then
    $$\bigotimes_i L_k^{(i)}\in\cL(P),\qquad \bigotimes_i U_k^{(i)}\in\cU(P),$$
    since a nonzero matrix entry forces the corresponding inequality in every coordinate. Moreover,
    $$\Big(\bigotimes_i L_k^{(i)}\Big)\Big(\bigotimes_i U_k^{(i)}\Big)=\bigotimes_i\big(L_k^{(i)}U_k^{(i)}\big)\longrightarrow\bigotimes_i C_i .$$
\end{proof}

Now we are able to prove our motivating example. For positive matrices a proof of this relation has recently appeared in \cite{alhejji2026majorizationrelationsumtensor}.

\begin{corollary}[Separable Ky Fan majorization]\label{cor:tensors}
Let $A_i,B_i\in \C^{d_i\times d_i}$ be arbitrary.  Then\vspace*{-5pt}
\begin{equation}\label{eq:SepFan}
    \sigma\!\left(\bigotimes_{i=1}^nA_i+
            \bigotimes_{i=1}^nB_i\right)
 \prec_w
 \left(\bigotimes_{i=1}^n \sigma(A_i)+
       \bigotimes_{i=1}^n \sigma(B_i)\right)^\downarrow.  
\end{equation}

If all $A_i$ and $B_i$ are positive semidefinite, then singular values become eigenvalues, and weak majorization becomes majorization.
\end{corollary}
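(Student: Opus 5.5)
We will derive the statement from \Cref{thm:KF} together with \Cref{lem:tensor}, taking $P=[d_1]\times\cdots\times[d_n]$ with the product order. For each $i$, choose singular value decompositions $A_i=V_iD_{\alpha_i}W_i$ and $B_i=\tilde V_iD_{\beta_i}\tilde W_i$, where $\alpha_i,\beta_i:[d_i]\to[0,\infty)$ list the singular values in decreasing order, $\alpha_i(j)=\sigma_j(A_i)$ and $\beta_i(j)=\sigma_j(B_i)$. Set $A\coloneqq\bigotimes_iA_i$ and $B\coloneqq\bigotimes_iB_i$. Then
$$A=\Big(\bigotimes_iV_i\Big)D_\alpha\Big(\bigotimes_iW_i\Big),\qquad B=\Big(\bigotimes_i\tilde V_i\Big)D_\beta\Big(\bigotimes_i\tilde W_i\Big),$$
where $\alpha(x)=\prod_i\alpha_i(x_i)$ and $\beta(x)=\prod_i\beta_i(x_i)$ for $x=(x_1,\ldots,x_n)\in P$. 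Tensor products of unitaries are unitary, so these are genuine singular value decompositions of matrices in $\C^{P\times P}$.

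Next we check the hypotheses of \Cref{thm:KF}. First, $\alpha$ is order-decreasing for the product order. If $x\leq y$, then $\alpha_i(x_i)\geq\alpha_i(y_i)\geq0$ for every $i$, and a product of nonnegative factors is monotone in each factor, so $\alpha(x)\geq\alpha(y)$. The same argument works for $\beta$; nonnegativity of the singular values is essential here. Second, the change-of-basis matrices factor as
$$V^*\tilde V=\bigotimes_iV_i^*\tilde V_i,\qquad W\tilde W^*=\bigotimes_iW_i\tilde W_i^*.$$
By \Cref{lem:tensor}, both lie in $\overline{\cL(P)\cU(P)}$. \Cref{thm:KF} then gives $\sigma(A+B)\prec_w\lambda(D_{\alpha+\beta})$. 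The diagonal of $D_{\alpha+\beta}$ is exactly the vector $\bigotimes_i\sigma(A_i)+\bigotimes_i\sigma(B_i)$ under the lexicographic identification of $P$ with $[d_1\cdots d_n]$. Since $\lambda(D_{\alpha+\beta})$ is the decreasing rearrangement of this diagonal, we obtain Eq.~(\ref{eq:SepFan}).

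For the positive semidefinite case, we take spectral decompositions, so that $V_i=W_i^*$ and $\tilde V_i=\tilde W_i^*$. Then $A$ and $B$ are positive semidefinite with eigenbases given by the tensor products. The change-of-basis matrix $C=\bigotimes_i(W_i\tilde W_i^*)$ lies in $\overline{\cL(P)\cU(P)}$ by \Cref{lem:tensor}. Applying \Cref{thm:KFlambda} directly yields the full majorization $\lambda(A+B)\prec\lambda(D_{\alpha+\beta})$, and for positive semidefinite $A+B$ its eigenvalues coincide with its singular values.

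We expect the main obstacle to be essentially bookkeeping: making sure the tensor factorization aligns the index set $P$ of both decompositions consistently, so that the same product order applies to both change-of-basis matrices and to $\alpha,\beta$. We also need to confirm that monotonicity of products requires the nonnegativity supplied by singular values or PSD eigenvalues. For general Hermitian factors, a product of order-decreasing functions need not be order-decreasing, which is why the eigenvalue version is stated only in the PSD case.
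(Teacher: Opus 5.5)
Your proposal is correct and is essentially the paper's proof: you take the product order on $P=[d_1]\times\cdots\times[d_n]$, use product labelings $\alpha,\beta$ (order-decreasing because singular values are nonnegative), place the tensor-product change-of-basis matrices in $\overline{\cL(P)\cU(P)}$ via \Cref{lem:tensor}, and then apply \Cref{thm:KF}. The only small deviation is in the positive semidefinite case: you apply \Cref{thm:KFlambda} directly to the spectral decompositions, whereas the paper observes that both sides of the weak majorization have total sum $\tr{A}+\tr{B}$, so it upgrades to majorization; either argument works.
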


\begin{proof}
We consider $P=[d_1]\times\cdots\times[d_n]$  equipped with the product order. Then \vspace*{-5pt}
$$
 \alpha(x)=\prod_{i=1}^n \sigma_{x_i}(A_i),
 \qquad
 \beta(x)=\prod_{i=1}^n \sigma_{x_i}(B_i),
$$
 are order-decreasing functions from  $P$ to $[0,\infty)$ that label the singular values of $A\coloneqq\otimes_i A_i$ and $B\coloneqq\otimes_i B_i$, respectively.  Moreover, the relative left and right singular-basis
matrices are tensor products of local unitaries. Hence,  \Cref{lem:tensor} allows us to apply \Cref{thm:KF}, which yields $$\sigma\big(A+B\big)\prec_w\lambda(D_{\alpha+\beta}),$$ and thus Eq.(\ref{eq:SepFan}). If all matrices are positive semidefinite, then singular values are equal to eigenvalues, and since the sum over all of them is just the trace, we obtain equality for the full sum in Eq.(\ref{eq:SepFan}) -- so we have majorization.
\end{proof}

	In order to obtain the next corollary we need the following combinatorial property:
	\begin{lemma}\label{lem:sort}
		Let $S=(s_1\le\cdots\le s_r)$ and $T=(t_1\le\cdots\le t_r)$ be nondecreasing
		$r$-tuples in $[d]$, and suppose some $\pi\in\mathfrak S_r$ satisfies
		$t_{\pi(a)}\le s_a$ for all $a\in[r]$. Then $t_a\le s_a$ for all $a$, i.e.\ $T\le S$
		in the product order.
	\end{lemma}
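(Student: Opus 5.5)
Fix $a\in[r]$; the goal is to show $t_a\le s_a$. The plan is a counting (pigeonhole) argument that uses only the monotonicity of the two sorted tuples and the bijectivity of $\pi$. The idea is to compare the indices $1,\dots,a$ of $S$, whose entries are all at most $s_a$, with their images under $\pi$ in $T$.

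First, for every $b\le a$ we have $t_{\pi(b)}\le s_b\le s_a$, using the hypothesis and then the fact that $S$ is nondecreasing. So the $a$ distinct indices $\pi(1),\dots,\pi(a)$ all point to entries of $T$ that are at most $s_a$. Since $\pi$ is injective, these are $a$ distinct positions $c\in[r]$ with $t_c\le s_a$.

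Second, because $T$ is nondecreasing, the set $\{c\in[r]\mid t_c\le s_a\}$ is an initial segment $\{1,\dots,m\}$. By the previous step this segment has at least $a$ elements, so $m\ge a$, and therefore $t_a\le t_m\le s_a$.

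Since $a$ was arbitrary, this gives $T\le S$ componentwise, which is the product order on $[d]^r$. I do not expect a real obstacle here. The only point needing care is to use the initial-segment structure of $T$ rather than trying to argue directly about $\pi(a)$ itself, since $\pi(a)$ may well exceed $a$.
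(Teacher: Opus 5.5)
Your proof is correct and follows essentially the paper's argument. In both, the $a$ distinct positions $\pi(1),\dots,\pi(a)$ all satisfy $t_{\pi(b)}\le s_a$, and the monotonicity of $T$ then forces $t_a\le s_a$. The only difference is presentation: you argue directly that $\{c\mid t_c\le s_a\}$ is an initial segment of length at least $a$, while the paper assumes $t_a>s_a$ and counts at most $a-1$ such positions to reach a contradiction.
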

	
	\begin{proof}
		Fix $a$ and assume $t_a>s_a$. Since $T$ is nondecreasing, $t_b\ge t_a>s_a$ for all
		$b\ge a$, so at most $a-1$ positions $b$ have $t_b\le s_a$. But for each $c\le a$,
		$t_{\pi(c)}\le s_c\le s_a$ since $S$ is nondecreasing. Also, $\pi(1),\dots,\pi(a)$ are
		$a$ distinct positions. Hence at least $a$ positions $b$ have $t_b\le s_a$. This is  a
		contradiction. Thus $t_a\le s_a$.
	\end{proof}

	\begin{corollary}[Exterior and symmetric powers]\label{cor:powers}
		Let $A,B\in \C^{d\times d}$.
		Then
	\begin{equation}
		\sigma\!\left(\bigwedge\nolimits^{\,\!r} A+\bigwedge\nolimits^{\,\!r} B\right)
		\prec_w
		\left(
		\prod_{t=1}^r \sigma_{i_t}(A)+\prod_{t=1}^r \sigma_{i_t}(B)
		\right)_{1\leq i_1<\cdots<i_r\leq d}^{\downarrow}, \text{ for }r\in[d],  \label{eq:corwedge}
	\end{equation}
		\begin{equation}
		\sigma\big(\Sym^r A+\Sym^r B\big)
		\prec_w
		\left(
		\prod_{t=1}^r \sigma_{i_t}(A)+\prod_{t=1}^r \sigma_{i_t}(B)
		\right)_{1\leq i_1\leq\cdots\leq i_r\leq d}^{\downarrow},  \text{ for }r\geq 1. \label{eq:symcor}
		\end{equation}
		Here $\bigwedge\nolimits^{\,\!r}$ and $\Sym^r$ are the  exterior and symmetric
		powers.  If $A,B\geq0$, then $\sigma$ can be replaced by $\lambda$ and
		$\prec_w$ by $\prec$ in both statements.
	\end{corollary}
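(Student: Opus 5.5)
We plan to derive \Cref{cor:powers} from \Cref{thm:KF} (and, in the positive case, from \Cref{thm:KFlambda}). The poset will be the index set of a natural basis of $\bigwedge^r\C^d$ (resp.\ $\Sym^r\C^d$), namely the nondecreasing $r$-tuples: $P_\wedge=\{(i_1<\cdots<i_r)\}$ and $P_{\Sym}=\{(i_1\le\cdots\le i_r)\}$. Each carries the product order inherited from $[d]^r$. Take SVDs $A=VD_aW$ and $B=\tilde VD_b\tilde W$, with $a_i=\sigma_i(A)$ and $b_i=\sigma_i(B)$ decreasing. Functoriality gives $\bigwedge^rA=(\bigwedge^rV)(\bigwedge^rD_a)(\bigwedge^rW)$, where $\bigwedge^r V,\bigwedge^r W$ are unitary and $\bigwedge^rD_a$ is diagonal in the basis $e_{i_1}\wedge\cdots\wedge e_{i_r}$ with entries $\alpha(I)=\prod_t a_{i_t}$. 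The same holds for $\Sym^r$ in the normalized symmetric-tensor basis. These $\alpha,\beta$ are order-decreasing on $P$ in the product order, since each factor is decreasing in $i_t$. The relative change-of-basis matrices are $\bigwedge^r(V^*\tilde V)$ and $\bigwedge^r(W\tilde W^*)$, and analogously for $\Sym^r$. The right-hand vectors in (\ref{eq:corwedge}) and (\ref{eq:symcor}) are exactly $\lambda(D_{\alpha+\beta})$. So everything reduces to showing
$$\textstyle\bigwedge^r C,\ \Sym^r C\in\overline{\cL(P)\cU(P)}\quad\text{for arbitrary } C\in\C^{d\times d}.$$

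For this key step, we choose $L_n\in\cL([d])$ and $U_n\in\cU([d])$ with $L_nU_n\to C$; this is possible by \Cref{lem:linear_order}. By functoriality and continuity, $\bigwedge^r(L_n)\bigwedge^r(U_n)=\bigwedge^r(L_nU_n)\to\bigwedge^rC$, and likewise for $\Sym^r$. It then remains to check that $\bigwedge^rL$ (resp.\ $\Sym^rL$) lies in $\cL(P)$ for $L$ lower triangular, and the dual statement for $U$. An entry of $\bigwedge^rL$ indexed by $(S,T)$ is $\det L_{S,T}$. The $(S,T)$ entry of $\Sym^rL$ is, up to a positive normalization, the permanent $\per L_{S,T}$ of the corresponding $r\times r$ matrix with repeated rows and columns. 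In both cases a nonzero entry forces a nonzero term in the Leibniz (resp.\ permanent) expansion, hence a bijection $\pi$ with $L_{s_a,t_{\pi(a)}}\neq0$, i.e.\ $t_{\pi(a)}\le s_a$ for all $a$. \Cref{lem:sort} converts this into $T\le S$ in the product order, which is exactly the membership $\bigwedge^rL,\Sym^rL\in\cL(P)$. Applying \Cref{lem:sort} to transposes, or with inequalities reversed, gives the upper-triangular case.

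With this, \Cref{thm:KF} yields the weak majorization in both statements. When $A,B\ge0$, we may take $W=V^*$ and $\tilde W=\tilde V^*$, so a single change-of-basis matrix $\bigwedge^r(V^*\tilde V)$ appears. \Cref{thm:KFlambda} then gives genuine majorization, since the traces match: $\tr{\bigwedge^rA}=\sum_I\alpha(I)$.

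The main obstacle is the symmetric-power bookkeeping. The basis vectors $e_{i_1}\cdots e_{i_r}$ must be normalized so that $\Sym^rV$ is unitary. We also need to verify that the matrix entries are permanents with repeated indices, up to positive factors that do not affect the zero pattern, so that the Leibniz argument and \Cref{lem:sort} apply to multisets. \Cref{lem:sort} as stated allows repeated entries, so this should go through once the entries are written out carefully.
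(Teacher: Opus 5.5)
Your proposal is correct and follows essentially the same route as the paper: you index by strictly (resp.\ weakly) increasing $r$-tuples with the product order and reduce to showing $\wedge^r C,\ \Sym^r C\in\overline{\cL(P)\cU(P)}$. You prove this from an LU-approximation in $[d]$ (\Cref{lem:linear_order}), multiplicativity and continuity of the powers, and the determinant/permanent expansion combined with \Cref{lem:sort}. The only minor difference is the positive semidefinite case: you apply \Cref{thm:KFlambda} directly with the single change-of-basis matrix $\wedge^r(V^*\tilde V)$, whereas the paper upgrades the weak majorization from \Cref{thm:KF} to majorization by noting that the total sums (traces) agree; both arguments work.
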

	\begin{proof}
		Both powers are continuous, multiplicative, and preserve unitarity \cite{Greub}. Multiplicativity means that for arbitrary $X,Y\in\C^{d\times d}$, we have $$\wedge^r(XY)=(\wedge^r X)(\wedge^r Y),\quad\text{ and }\quad \Sym^r(XY)=\Sym^r(X)\Sym^r(Y).$$
		
		For $\bigwedge^r$, we index the wedge basis $(e_i)_{i\in P}$ by strictly increasing tuples, i.e., we choose the base set $P_{ext}\coloneqq\{i\in[d]^r\mid i_1<\cdots< i_r\}$ 
		equipped with the product order of $[d]^r$. Then $\alpha(i)\coloneqq\prod_{t=1}^r\sigma_{i_t}(A)$ and $\beta(i)\coloneqq\prod_{t=1}^r\sigma_{i_t}(B)$ are order-decreasing maps $P_{ext}\rightarrow[0,\infty)$ that label the singular values of $\wedge^r A$ and $\wedge^r B$, respectively. Due to multiplicativity, the change-of-basis matrices have the form $\wedge^r C$ for some $C\in\C^{d\times d}$. In order to apply \Cref{thm:KF}    it remains to show that $\wedge^r C\in\overline{\cL(P_{ext})\cU(P_{ext})}$, i.e., that the exterior power preserves triangularity. 
To do this we notice that  the entries of
		$\wedge^rC$ are the $r\times r$ minors $(\wedge^rC)_{S,T}=\det C_{S,T}$, cf. Cauchy--Binet formula  \cite[\S0.8]{horn13},\cite[Ch.~V]{Greub}. Let $C\in{\mathcal L}([d])$, i.e.\
		$C_{x,y}=0$ unless $y\le x$. By the definition
		$\det C_{S,T}=\sum_{\pi}\operatorname{sgn}(\pi)\prod_{a}C_{s_a,t_{\pi(a)}}$. If  $\det C_{S,T}\ne 0$ it follows that there exists  $\pi$ such that $C_{s_a,t_{\pi(a)}}\ne0$ for all $a$, whence
		$t_{\pi(a)}\le s_a$ and, by Lemma~\ref{lem:sort}, $T\le S$. Thus
		$(\wedge^rC)_{S,T}=0$ unless $T\le S$, i.e.\ 
\begin{equation}\label{eq:TRL}
C\in{\mathcal L}([d]) \Longrightarrow	 \wedge^rC\in \cL(P_{ext}).
\end{equation} 
		
		Replacing ``$y\le x$'' by ``$x\le y$'' throughout proves the  analogue of \eqref{eq:TRL} for upper triangular matrices, namely,   
\begin{equation}\label{eq:TRU}
	C\in{\mathcal U}([d]) \Longrightarrow	 \wedge^rC\in \cU(P_{ext}).
\end{equation} 
		
Since by the Cauchy--Binet formulas the entries of $\wedge^rC$ are polynomials in the entries of $C$, they are continuous. Given an arbitrary $C\in \C^{d\times d}$,  we can choose $L_n\in\cL([d])$,
		$U_n\in\cU([d])$ with $L_nU_n\to C$, since 
		$\overline{\cL([d])\cU([d])} =\C^{d\times d}$ by Lemma~\ref{lem:linear_order}. Then $\wedge^rL_n\in\cL(P_{ext})$,
		$\wedge^rU_n\in\cU(P_{ext})$ by the formulas \eqref{eq:TRL} and \eqref{eq:TRU}. Therefore,
		$$(\wedge^rL_n)(\wedge^rU_n)=\wedge^r(L_nU_n)\to\wedge^rC, \mbox{ so }
		\wedge^rC\in\overline{\cL(P_{ext})\cU(P_{ext})}.$$
		
		Similarly, for $\Sym^r$, we choose the set $P_{sym}\coloneqq\{i\in[d]^r\mid i_1\leq\cdots\leq i_r\}$ 
		equipped with the product order of $[d]^r$. The elements of $P_{sym}$ then label a basis of normalized symmetric tensors. Again, the same choice for $\alpha$ and $\beta$ is order-decreasing and labels the singular values of $\Sym^r A$ and $\Sym^r B$. The entries of $\Sym^{r}C$ are permanents up to positive normalization,
		\[
		(\Sym^{r}C)_{S,T}=\frac{\per C_{S,T}}{\sqrt{\mu(S)\,\mu(T)}},
		\]
		where $C_{S,T}=(C_{s_a,t_b})_{a,b=1}^{r}$, see \cite[p. 271]{Bhatia84},
		\cite{MarcusMinc}. As $\per C_{S,T}=\sum_{\pi}\prod_{a}C_{s_a,t_{\pi(a)}}$, similar to the determinant case, the
		identical argument with Lemma~\ref{lem:sort} gives $(\Sym^{r}C)_{S,T}=0$ unless
		$T\le S$; hence $\Sym^{r}C\in\cL(P_{sym})$.  Since the permanent is a polynomial, this similarly shows that  $\Sym^{r}C\in\overline{\cL(P_{sym})\cU(P_{sym})}$ and that the symmetric power preserves triangularity. That concludes the proof for the singular values and general matrices.

        If $A$ and $B$ are positive semidefinite, then
 $\wedge^rA,\wedge^rB\ge0$, and the singular values coincide
 with eigenvalues and $\sigma$ may be replaced by $\lambda$. Moreover, both sides of
 \eqref{eq:corwedge}  have equal total sums: the left hand side sums to
 ${\rm tr\,} (\wedge^rA+\wedge^rB) ={\rm tr\,} (\wedge^rA)+{\rm tr\,} (\wedge^rB)=\sum_{i\in P_{ext}}\alpha(i)+\sum_{i\in P_{ext}}\beta(i)$,
 which is exactly the sum of the entries of the right hand side. A weak majorization
 between two vectors with equal total sums is a majorization, so $\prec_w$ improves to
 $\prec$. The same holds for~\eqref{eq:symcor}. 
	\end{proof}

\bigskip

For $A_i\in\C^{d_i\times d_i}$ and $a_i\in\R^{d_i}$ we define the \emph{Kronecker sums}
$$\begin{aligned}
    \mathop{\boxplus}\limits_{i=1}^n A_i&\coloneqq \sum_{i=1}^n \1\otimes\cdots\otimes A_i\otimes\cdots\otimes\1\in\bigotimes_{i=1}^n\C^{d_i\times d_i},\\
    \quad \mathop{\boxplus}\limits_{i=1}^n a_i&\coloneqq \sum_{i=1}^n {\bf 1}\otimes\cdots\otimes a_i\otimes\cdots\otimes{\bf 1}\in\bigotimes_{i=1}^{n}\R^{d_i},
\end{aligned} 
$$ where ${\bf 1}\coloneqq(1,\ldots, 1)$.

\begin{corollary}[Products of Kronecker sums]\label{cor:KS}
    Let $A_i,B_i\in\C^{d_i\times d_i}$ be positive semidefinite, $A:=\boxplus_{i=1}^n A_i$, $B:=\boxplus_{i=1}^n B_i$, $a:=\boxplus_{i=1}^n\lambda(A_i)$, $b:=\boxplus_{i=1}^n\lambda(B_i)$. Then\vspace*{-4pt}
    \begin{equation}
        \sigma(AB)\prec_{\log} (a*b)^\downarrow\prec_{\log}a^\downarrow *b^\downarrow=\lambda(A)*\lambda(B).
    \end{equation}
\end{corollary}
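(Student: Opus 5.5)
The plan is to apply \Cref{thm:HornP} with the poset $P=[d_1]\times\cdots\times[d_n]$ carrying the product order, exactly parallel to the proof of \Cref{cor:tensors}.

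First, diagonalize each factor: $A_i=U_iD_{\lambda(A_i)}U_i^*$ and $B_i=V_iD_{\lambda(B_i)}V_i^*$, with the eigenvalues arranged decreasingly. Then
$$A=\Big(\bigotimes_i U_i\Big)D_a\Big(\bigotimes_i U_i\Big)^*,\qquad B=\Big(\bigotimes_i V_i\Big)D_b\Big(\bigotimes_i V_i\Big)^*.$$
Here $D_a$ and $D_b$ are the diagonal matrices indexed by $x\in P$ with entries $\alpha(x)=\sum_i\lambda_{x_i}(A_i)$ and $\beta(x)=\sum_i\lambda_{x_i}(B_i)$. Since every $\lambda(A_i)\ge 0$ is decreasing, $\alpha$ is nonnegative and order-decreasing on $P$: if $x\le y$ componentwise, then each summand satisfies $\lambda_{x_i}(A_i)\ge\lambda_{y_i}(A_i)$. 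The same holds for $\beta$. As $A,B\ge0$, these spectral decompositions are singular value decompositions $A=VD_\alpha W$ and $B=\tilde V D_\beta\tilde W$, with $W=(\otimes_iU_i)^*$ and $\tilde V=\otimes_i V_i$.

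The change-of-basis matrix is therefore
$$C=W\tilde V=\bigotimes_i U_i^*V_i.$$
This is a tensor product of arbitrary $d_i\times d_i$ matrices, so \Cref{lem:tensor} gives $C\in\overline{\cL(P)\cU(P)}$. \Cref{thm:HornP} then yields
$$\sigma(AB)\prec_{\log}\lambda(D_{\alpha\beta})=(a*b)^\downarrow\prec_{\log}\sigma(A)*\sigma(B).$$
Since $A,B\ge0$, we have $\sigma(A)=\lambda(A)=a^\downarrow$ and $\sigma(B)=\lambda(B)=b^\downarrow$, because the entries of $a$ are exactly the eigenvalues of $A$ (the Kronecker sum is diagonalized by the product basis).

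The only point requiring care is bookkeeping. The vector $a=\boxplus_i\lambda(A_i)$ must be identified with the function $\alpha$ on $P$, so that $D_{\alpha\beta}$ has diagonal $a*b$ in the same product indexing. This holds provided both $a$ and $b$ are built from decreasingly ordered $\lambda(A_i)$ and $\lambda(B_i)$, which is the convention $\lambda(\cdot)$ uses. I expect no real obstacle; the main check is that the Horn theorem's hypothesis concerns $W\tilde V$ rather than both side unitaries, and here that matrix is a local tensor product.
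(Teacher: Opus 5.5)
Your proposal is correct and is essentially the paper's proof. Both use the product order on $P=[d_1]\times\cdots\times[d_n]$ and the additive, order-decreasing spectral labels $\alpha,\beta$. Both note that the change-of-basis matrix $C=W\tilde V=\bigotimes_i U_i^*V_i$ is a local tensor product, so \Cref{lem:tensor} applies, and then invoke \Cref{thm:HornP}. Your added bookkeeping (reading off $W$ and $\tilde V$ from the spectral decompositions, and $\sigma=\lambda$ for positive semidefinite matrices) only makes explicit what the paper leaves implicit.
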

\begin{proof}
    We use the product order on $P=[d_1]\times\cdots\times[d_n]$. Since product eigenvectors diagonalize $A$ and $B$, we can label their eigenvalues by the order-decreasing functions $\alpha,\beta:P\rightarrow[0,\infty)$:
    $$\alpha(x)\coloneqq\sum_{i=1}^n \lambda_{x_i}(A_i)=a_{x},\qquad \beta(x)\coloneqq\sum_{i=1}^n \lambda_{x_i}(B_i)=b_x.$$
    Moreover, the change-of-basis matrix is a tensor product $C=\bigotimes_{i=1}^n C_i$ so that \Cref{lem:tensor} guarantees $C\in\overline{\cL(P)\cU(P)}$ and the result follows from \Cref{thm:HornP}.
\end{proof}

\section{Discussion}
We have seen that the product order gives rise to various applications of the presented method in which the obtained majorization relations, as far as we see,  do not easily follow from the classical relations. It would be interesting to know whether there are other classes of examples with a natural choice of the partial order and non-trivial implications. 

For that purpose, it might also be useful to apply the method under weaker assumptions. In fact, if we look into the proofs of \Cref{thm:KFlambda}, \Cref{thm:KF}, and \Cref{thm:HornP}, we see that LU-approximability was only a convenient way of stating sufficiently strong assumptions. What we have really used, and what the proofs show is implied by LU-approximability, is the following condition on the change-of-basis matrices: whenever $\det C_{S,T}\neq 0$, then there is an $R\in\binom{P}{k}$ s.t. $R\preceq_k S\; \wedge\; R\preceq_k T$. Here,  we write $R\preceq_k T$  for two sets $R,T\in\binom{P}{k}$ iff there is a bijection $\tau:R\rightarrow T$ s.t. $r\leq \tau(r)$ for all $r\in R$.

\section*{Acknowledgments}
Support by the German-Israeli Foundation for Scientific Research and Development (GIF) under the grant no. I-3014-304.6/2026 is greatly acknowledged.

%
%

\appendix

\bibliographystyle{halpha}
\bibliography{KyFan}{}\vspace*{15pt}

@Article{Alhejji2026,
author={Alhejji, Mohammad A.},
title={Refining {Ky Fan's} Majorization Relation with Linear Programming},
journal={Annales Henri Poincar{\'e}},
year={2026},
volume={27},
pages={909-932}
}

@misc{alhejji2026majorizationrelationsumtensor,
      title={A majorization relation for a sum of two tensor products of positive semidefinite operators}, 
      author={Mohammad A. Alhejji and Cole Kelson-Packer},
      year={2026},
      eprint={arXiv:2607.07913}
}

@article{compound,
author = {Fiedler, Miroslav},
journal = {Czechoslovak Mathematical Journal},
language = {eng},
number = {3},
pages = {392-402},
publisher = {Institute of Mathematics, Academy of Sciences of the Czech Republic},
title = {Additive compound matrices and an inequality for eigenvalues of symmetric stochastic matrices},
url = {http://eudml.org/doc/12802},
volume = {24},
year = {1974},
}

@book{horn13,
  address = {Cambridge; New York},
  edition = {2nd},
  author = {Horn, Roger A. and Johnson, Charles R.},
  publisher = {Cambridge University Press},
  title = {Matrix Analysis},
  year = 2013
}

@book{Bhatia,
    author = {R. Bhatia},
    title = {Matrix Analysis},
    publisher = {Springer},
    year = 1997
}

@article{Bhatia84,
    author = {R. Bhatia},
    title = {Variation of symmetric tensor powers and permanents},
    volume = {62}, pages = {269-276},
    journal = {Linear Algebra and its Applications},
    year = 1984,
    doi={10.1016/0024-3795(84)90102-2.}
}

@book{MarcusMinc,
    author = {M. Marcus and H. Minc},
    title = {A Survey of Matrix Theory and Matrix Inequalities},
    publisher = {Allyn and Bacon, Boston},
    year = 1964
}

@book{Greub,
    author = {Greub, Werner},
    title = {Multilinear Algebra},
    publisher = {Springer},
    edition   = {2},
    year = 1978
}

@book{Cordes,
author={H.O. Cordes}, title={Spectral Theory of Linear Differential Operators and Comparison Algebras}, publisher={Cambridge University Press}, year={1987}}

@book{MOmajo,
    author = { Marshall, Albert W. and  Olkin, Ingram  and  Arnold, Barry C.},
    title = {Inequalities: Theory of Majorization and Its Applications},
    publisher = {Springer},
    year = 2011
}

\end{document}